\newtheorem{theorem}{Theorem}[section]
\newtheorem{corollary}[theorem]{Corollary}
\newtheorem{lemma}[theorem]{Lemma}
\newtheorem{proposition}[theorem]{Proposition}
\newtheorem{fact}[theorem]{Fact}
\theoremstyle{definition}
\theoremstyle{remark}
\numberwithin{equation}{section}
\newcommand{\mb}[1]{\mathbb{#1}}
\newcommand{\mc}[1]{\mathcal{#1}}
\title[Products of two involutions in $\mc R$]
{Selected facts on products of two involutions in the Riordan group}
\author[R. S{\l}owik]{Roksana S{\l}owik}
\address{%
	Faculty of Applied Mathematics \\
	Silesian University of Technology\\
	Kaszubska 23\\
	44-100 Gliwice\\
	Poland\\}
\email{roksana.slowik@gmail.com}
\author[T. Lohan]{Tejbir Lohan}
\address{
	Theoretical Statistics and Mathematics Unit\\
	Indian Statistical Institute\\
	Delhi Center\\
	New Delhi 110016\\
	India\\}
\email{tejbirlohan70@gmail.com}
\subjclass[2010]{Primary 20H20, 15A23; Secondary 05A15, 20E45}
\keywords{involution, Riordan array, Riordan group, strongly reversible element, pseudo-involution, commutator}
\begin{document}
	
	\begin{abstract}
		An element of a group is called \emph{reversible} if it is conjugate to its inverse, and \emph{strongly reversible} if it can be expressed as a product of two involutions. 
		We study strongly reversible elements in the Riordan group and in several of its important subgroups. 
		We show that not every reversible element in the Riordan group is strongly reversible, and we investigate products of reversible elements in the Riordan group.
	\end{abstract}

\maketitle

\section{Introduction}

The Riordan group $\mathcal{R}$ plays a significant role in matrix theory and algebraic combinatorics due to its rich algebraic, analytic, and combinatorial structures. It was introduced by Shapiro et al.\ in \cite{sh_ge_wo_wo} and named after John Riordan. The group consists of an infinite family of lower triangular matrices with entries from a commutative ring $R$ with identity, known as Riordan matrices or Riordan arrays. In this paper we will assume that $R$ is one of the fields $\mb R$ or $\mb C$. Each Riordan matrix $R=(g(t), f(t))$ is determined by a pair of formal power series $(g(t), f(t))$ with $g(0) \neq 0$, $f(0) = 0$, and $f'(0) \neq 0$, where the generating function of the $k$-th column is $g(t) f(t)^k$. The group operation is the standard matrix multiplication, as described by the Fundamental Theorem of Riordan Arrays (FTRA), and corresponds to the following multiplication of power series:
$$
\left(g(t), f(t)\right)\left(u(t), v(t)\right)=\left(g(t)u\!\left(f(t)\right), v\!\left(f(t)\right)\right).
$$
This elegant structure enables the systematic study of generating functions, combinatorial identities, and lattice path enumeration; see \cite{bar, Shapiro_book_22} for a detailed exposition of this topic.

An element of a group is called an \textit{involution} if it has order at most two. The decomposition of group elements into products of involutions is a problem of broad interest across various areas of mathematics, with particular attention to elements that can be expressed as products of two involutions. For instance, in the general linear group $\mathrm{GL}_n(\mathbb{F})$ over an arbitrary field $\mathbb{F}$, it is known that every element with determinant $\pm 1$ can be written as a product of at most four involutions, and that an element of $\mathrm{GL}_n(\mathbb{F})$ is a product of two involutions if and only if it is similar to its inverse; see \cite{gu_ha_ra,wo,do,ho_pa}.

An element of a group is called \textit{strongly reversible} if it can be expressed as a product of two involutions. These elements are closely related to \emph{reversible} elements, which are those conjugate to their own inverses. Note that an element is strongly reversible if and only if it is conjugate to its inverse by an involution. Every strongly reversible element is reversible, although the converse does not generally hold. The classification and study of reversible and strongly reversible elements in groups form a historically rich and active area of research; see the monograph~\cite{OS} for a comprehensive exposition. In this article, we investigate this problem in the context of the Riordan group $\mathcal{R}$.

The set $\mathfrak{J}$ of involutions in $\mathcal{R}$ and their properties has been extensively studied in the literature; see \cite{ki, Ch_ki, Ch_ki_sh, lu_mo_pr_2017, sh, lu_mo_pr_2022} and references therein. For instance, A.~L\'{u}zon et al. proved that $\mathfrak{J}$ generates a subgroup $\langle\mathfrak{J}\rangle$ of $\mathcal{R}$, and every element of $\langle\mathfrak{J}\rangle$ can be written as a product of at most four Riordan involutions; see \cite{lu_mo_pr_2022}. Moreover, there exist elements in $\langle\mathfrak{J}\rangle$ that cannot be expressed as the product of three or fewer involutions; see \cite[p. 215]{lu_mo_pr_2022}. Note that every involution in $\mathcal{R}$ is strongly reversible, and it is known \cite{ki} that in $\mathcal{R}$, every nonscalar involution is similar either to the array
$$
M:=(1,-t)=
\begin{bmatrix}
	1&&&&\\
	&-1&&&\\
	&&1&&\\
	&&&-1&\\
	&&&&\ddots\\
\end{bmatrix}
$$
or to $-M=(-1,-t)$. 

\par 
Let us note that every involution $R\in\mathcal R$ may be treated as a product of two involutions (as $R=-I\cdot(-R)$). To avoid this ambiguity, in this paper we agree that saying that an array is a product of two involutions we mean that it is not involution itself. We will start by observing the following result.

\begin{fact}
	\label{prop:comm}
	Every product of two Riordan involutions is of the form $\pm C$, where $C$ denotes a commutator.
\end{fact}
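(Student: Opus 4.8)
The plan is to reduce the statement to the classification of Riordan involutions recalled just above: every nonscalar involution of $\mathcal{R}$ is similar to $M=(1,-t)$ or to $-M=(-1,-t)$. The observation that drives everything is that $-M=(-I)\,M$ and that the scalar array $-I=(-1,t)$ is central in $\mathcal{R}$; consequently every nonscalar involution of $\mathcal{R}$ can be written in the form $\varepsilon\,PMP^{-1}$ with $P\in\mathcal{R}$ and $\varepsilon\in\{1,-1\}$. In other words, up to a central sign all nonscalar involutions are conjugate to the single array $M$, hence conjugate to one another, and this is all we need.

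First I would dispose of the scalar involutions $\pm I$. For any two involutions $A,B$ one has $(AB)^2=I$ precisely when $A$ and $B$ commute, because $(AB)^{-1}=B^{-1}A^{-1}=BA$; since $\pm I$ is central, if $A$ or $B$ equals $\pm I$ then $A$ and $B$ commute and $AB$ is itself an involution. By the convention adopted immediately before the statement, such products are not under consideration, so we may assume that $A$ and $B$ are both nonscalar involutions.

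Next, using the first paragraph, write $A=\varepsilon_1\,PMP^{-1}$ and $B=\varepsilon_2\,QMQ^{-1}$ with $P,Q\in\mathcal{R}$ and $\varepsilon_1,\varepsilon_2\in\{1,-1\}$, and set $a:=PMP^{-1}$ and $g:=QP^{-1}$, both of which lie in $\mathcal{R}$. From $M^2=I$ we get $a^2=I$, so $a^{-1}=a$; and a one-line cancellation gives $gag^{-1}=QP^{-1}\cdot PMP^{-1}\cdot PQ^{-1}=QMQ^{-1}$. Hence
$$
AB=\varepsilon_1\varepsilon_2\,a\,(QMQ^{-1})=\varepsilon_1\varepsilon_2\,a\,g\,a\,g^{-1}=\varepsilon_1\varepsilon_2\,\bigl(a\,g\,a^{-1}g^{-1}\bigr)=\varepsilon_1\varepsilon_2\,[a,g].
$$
Since $\varepsilon_1\varepsilon_2\in\{1,-1\}$, this exhibits $AB$ in the required form $\pm C$ with $C=[a,g]$ a commutator in $\mathcal{R}$.

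I do not expect any genuinely hard step in this argument: all of the content sits in the classification theorem from \cite{ki} together with the elementary remark that $M$ and $-M$ differ by the central factor $-I$. The only points that call for a little care are the bookkeeping of the signs $\varepsilon_1,\varepsilon_2$, checking that the auxiliary arrays $a$ and $g$ genuinely belong to $\mathcal{R}$ (which is immediate, since $\mathcal{R}$ is a group containing $P$, $Q$ and $M$), and being explicit that the convention fixed before the statement is exactly what removes the degenerate situation in which one of the two factors equals $\pm I$.
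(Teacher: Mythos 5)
Your proposal is correct and follows essentially the same route as the paper: write each nonscalar involution as a (signed) conjugate of $M$, use $M^2=I$ to turn the product into $a\,gag^{-1}=[a,g]$ up to a central sign. Your explicit handling of the scalar involutions and of the sign via the central factor $-I$ is a slightly tidier bookkeeping of what the paper dispatches in its closing sentence, but the underlying computation is identical.
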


The above fact can be, indeed, derived on many ways. Thus, one may dispute its relevance, especially since the obtained element does not have any special form, for instance we cannot obtain here a commutator of two involutions. Yet, inspired by this result we will show that in some subgroups of $\mc R$ the products of two involutions are commutators of two elements from those subgroups. 
In addition we will also provide some information on reversible and strongly reversible elements in $\mc R$.  

\section{Results}

\subsection{Products of two involutions}

\par 
As it was mentioned earlier, any nonscalar Riordan involution in $\mathcal{R}$ is similar to $\pm M$. Using this fact from \cite{ki}, we obtain the following result.

\begin{proof}[Proof of Fact \ref{prop:comm}]
	Consider a product of two involutions $I_1=R_1^{-1}MR_1$, $I_2=R_2^{-1}MR_2$. We have 
	$$
	\begin{array}{rl}
		I_1I_2 & =R_1^{-1}MR_1R_2^{-1}MR_2=R_1^{-1}\left(MR_1R_2^{-1}MR_2R_1^{-1}\right)R_1\\
		& \stackrel{M^2=I}{=}R_1^{-1}\left[M^{-1}\left(R_2R_1^{-1}\right)^{-1}MR_2R_1^{-1}\right]R_1
		\\
		& =\left[M,R_2R_1^{-1}\right]^{R_1}=
		\left[M^{R_1},R_1^{-1}R_2\right], 
		\\
	\end{array}
	$$
	i.e. their product is a commutator. 
	Clearly, in the case when both $I_1$, $I_2$ are similar to $-M$, we obtain the same result, whereas for the case when one of them is similar to $-M$, we obtain a negative of a commutator. 
\end{proof}

\par 
Commutator subgroup of the Riordan group was studied in \cite{lu_mo_pr_2023}. Let us recall that from Thm.1 of this paper we learn

\begin{proposition}
	\label{prop:comm_desc}
	If $F$ is a field of characteristic $0$, then the commutator subgroup of the Riordan group $\mc R$ defined over $F$ consists of all Riordan arrays with $1$'s in the main diagonal. 
	Moreover, every element of the commutator subgroup is a (single) commutator. 
\end{proposition}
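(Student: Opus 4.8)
The plan is to determine the abelianization of $\mc R$ explicitly and then realize every element of its kernel as a \emph{single} commutator, which gives both assertions at once.

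First, note that the $(n,n)$-entry of a Riordan array $(g,f)$ is $[t^n]\,g(t)f(t)^n = g(0)f'(0)^n$, so the main diagonal of $(g,f)$ is $g(0),\,g(0)f'(0),\,g(0)f'(0)^2,\dots$; it is identically $1$ precisely when $g(0)=1$ and $f'(0)=1$. Define $\chi\colon\mc R\to F^\times\times F^\times$ by $\chi(g,f)=(g(0),f'(0))$. A one-line check with the Riordan product shows $\chi$ is a surjective group homomorphism onto the abelian group $F^\times\times F^\times$, with kernel $K:=\{(g,f)\in\mc R:\ g(0)=f'(0)=1\}$, the set of Riordan arrays with $1$'s on the diagonal. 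Hence $[\mc R,\mc R]\subseteq K$, and it remains to show that every element of $K$ is a single commutator in $\mc R$.

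Fix $(g,f)\in K$, and choose $\lambda\in F$ with $\lambda^k\neq 1$ for all $k\ge 1$ (possible because $\operatorname{char}F=0$; e.g. $\lambda=2$). The idea is to use the decomposition $(g,f)=(g,t)(1,f)$ and the fact that the Appell subgroup $A=\{(h,t):h(0)\neq0\}$, being the kernel of $(g,f)\mapsto f$, is normal, to separate the two components. Concretely, look for $P=(p,\phi)$ and $Q=(1,\lambda t)$ in $\mc R$ with $[P,Q]=(g,f)$. A direct computation with the Riordan product gives
$$
[P,Q]=PQP^{-1}Q^{-1}=\left(\frac{p(t)}{p(\rho(t))},\ \tfrac1\lambda\,\phi^{-1}\!\left(\lambda\phi(t)\right)\right),\qquad \rho(t):=\phi^{-1}\!\left(\lambda\phi(t)\right),
$$
where $\phi^{-1}$ denotes the compositional inverse of $\phi$. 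Thus it suffices to solve, in turn, two problems: (i) find $\phi=\phi_0$ with $\phi_0(0)=0$ and $\phi_0'(0)=1$ such that $\tfrac1\lambda\phi_0^{-1}(\lambda\phi_0(t))=f(t)$; and then (ii), setting $\rho_0(t):=\phi_0^{-1}(\lambda\phi_0(t))$ (which has $\rho_0'(0)=\lambda$), find $p$ with $p(0)=1$ such that $p(t)/p(\rho_0(t))=g(t)$.

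Both problems are solved coefficient by coefficient. In (i), writing $\phi_0=t+\sum_{n\ge2}b_nt^n$, the coefficient of $t^n$ on the left-hand side has the form $(1-\lambda^{n-1})b_n+\big(\text{polynomial in }b_2,\dots,b_{n-1}\big)$; since $1-\lambda^{n-1}\neq0$ we solve for $b_n$ recursively (the $n=1$ equation is automatic, as $f'(0)=1$). In (ii), writing $p=1+\sum_{n\ge1}p_nt^n$ — or, equivalently, taking logarithms and solving $\log p-(\log p)\circ\rho_0=\log g$ — the coefficient of $t^n$ is $(1-\lambda^n)p_n+\big(\text{polynomial in }p_1,\dots,p_{n-1}\big)$, and $1-\lambda^n\neq0$ again permits a recursive solution (the constant term matches automatically). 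All coefficients stay in $F$. The resulting $P=(p,\phi_0)$ and $Q=(1,\lambda t)$ lie in $\mc R$, and $[P,Q]=(g,f)$ by construction; hence $(g,f)$ is a single commutator.

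I expect the main obstacle to be bookkeeping rather than ideas: one must verify that the two composition conventions in play — ordinary composition $\phi^{-1}\circ(\lambda\phi)$ versus the ``substitute into $f$'' rule of the Riordan product — are handled consistently in the evaluation of $[P,Q]$, and in each of (i), (ii) that the order-$n$ equation really is affine in the top unknown with nonzero pivot $1-\lambda^{n-1}$, resp. $1-\lambda^{n}$. The hypothesis $\operatorname{char}F=0$ enters exactly through the existence of such a $\lambda$ of infinite multiplicative order (and, if one prefers the logarithmic form of (ii), through the availability of $\exp$ and $\log$ of power series). A less economical alternative would be to first exhibit generators of $K$ that are visibly commutators, giving $K\subseteq[\mc R,\mc R]$, and then upgrade to single commutators by a separate argument; the construction above achieves both simultaneously.
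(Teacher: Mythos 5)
Your proof is correct. Note first that the paper does not prove this proposition at all: it is recalled without proof from Theorem~1 of \cite{lu_mo_pr_2023}, so there is no in-paper argument to compare against; what you have supplied is a self-contained proof, and it holds up. The homomorphism $\chi(g,f)=(g(0),f'(0))$ onto $F^\times\times F^\times$ correctly identifies the arrays with unit diagonal as its kernel $K$ (the $(n,n)$ entry is $g(0)f'(0)^n$), giving $[\mc R,\mc R]\subseteq K$, and your commutator formula
$[P,Q]=\left(\frac{p(t)}{p(\rho(t))},\,\tfrac1\lambda\phi^{-1}(\lambda\phi(t))\right)$ with $Q=(1,\lambda t)$ checks out against the paper's product rule (under which the second component of $(g,f)(u,v)$ is $v\circ f$, so the order of composition has to be tracked carefully --- your displayed result is the correct outcome of that bookkeeping). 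Step (i) is precisely the formal linearization (Koenigs) argument: $\lambda f$ has multiplier $\lambda$ of infinite multiplicative order and is therefore formally conjugate to $\lambda t$ by a unique $\phi_0=t+\cdots$, the order-$n$ equation being affine in $b_n$ with pivot $1-\lambda^{n-1}\neq 0$. Step (ii) is the standard twisted coboundary equation $p=g\cdot(p\circ\rho_0)$, solvable recursively with pivot $1-\lambda^{n}\neq 0$, the constant term matching because $g(0)=1$. Both recursions close, $P=(p,\phi_0)$ and $Q$ lie in $\mc R$, and the two assertions of the proposition (identification of the derived subgroup and the single-commutator property) follow simultaneously, exactly as you say.
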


Thus, every product of two involutions is either contained in the commutator subgroup of $\mc R$ or in the set consisting of negatives of commutators. 

\par 
Observe that in the commutator obtained in the proof of Fact \ref{prop:comm} the array $M^{R_1}$ is an involution. Yet, we cannot expect this from the array $R_1^{-1}R_2$ - from \cite{ofar} it follows that the series conjugating $-t$ with any other involution starts with the term $t$, and thus the arrays $R_1$, $R_2$ have constant diagonals, and so $R_1^{-1}R_2$ must have, so it is not an involution\footnote{Clearly, with exception of the trivial case.}. 
Yet, as promised before, we will see that some subgroups of $\mc R$ possess such a nice property that the products of two involutions from $\mc S$ are commutators of elements of $\mc S$.

\subsection{Products of two involutions in subgroups of $\mc R$}

\par 
The Riordan group $\mathcal{R}$ has several well-studied subgroups defined by constraints on the generating functions $(g(t),f(t))$ (see \cite[Section~3.2]{Shapiro_book_22}). Note that every involution in $\mathcal{R}$ is similar to one of the involutions in the subgroup $\mathcal{K}:=\{\pm I, \pm M\}$ of $\mathcal{R}$, which is isomorphic to $\mathbb{Z}_2 \times \mathbb{Z}_2$. Therefore, to characterize products of two involutions in a subgroup $H$ of $\mathcal{R}$, it suffices to determine whether, for each nonscalar involution in $H$ that is similar to $\pm M$ in $\mathcal{R}$, we can choose a suitable transition (i.e. change-of-basis) matrix in $H$. If this holds, then we can formulate a variation of Fact~\ref{prop:comm} for the subgroup $H$ by using a similar argument as in the proof of Fact~\ref{prop:comm}. 
In what follows, we consider some of these subgroups and apply the above-mentioned strategy to investigate products of two involutions in each case.

\begin{enumerate}
	\item Consider the \textbf{derivative subgroup} 
	$$\mathcal{D} = \{(h'(t),h(t)) \in \mathcal{R} : h(0)=0,\, h'(0)\neq 0\}.$$
	Let $(h'(t),h(t)) \in \mathcal{D}$ be an involution. Then
	$$
	\begin{array}{c}
		(h'(t),h(t))(h'(t),h(t)) = (h'(t) h'(h(t)),h(h(t))) = (1,t) \\
		\iff \\
		h(h(t)) = t.\\
	\end{array}
	$$
	Thus, $(h'(t),h(t))$ is an involution if and only if $h(t)$ is an involution. Although the description of its involutions looks quite standard, we see that matrix $R$ that conjugates given involution to $M=(1,-t)$ is never in our subgroup. Namely, suppose that $R=\left(x'(t),x(t)\right)$. Then 
	$$
	R^{-1}\left(h'(t),h(t)\right)R=(1,-t) 
	\quad\Leftrightarrow\quad 
	\begin{cases}
		h'(t)x'\left(h(t)\right)=x'(t)\\
		x\left(h(t)\right)=-x(t).\\
	\end{cases}
	$$
	From the latter system we get
	$$
	h'(t)x'\left(h(t)\right)=x'(t)=-\left(x\left(h(t)\right)\right)'=-x'\left(h(t)\right)h'(t)
	$$
	which implies
	$$
	x'(t)=h'(t)x'\left(h(t)\right)=0,
	$$
	that is a contradiction. Yet, by Proposition \ref{prop:comm_desc} a composition of two involutions $h_1(t)$, $h_2(t)$ is a commutator of two series $\left[u(t),v(t)\right]$. Then 
	$$\left[\left(u'(t),u(t)\right),\left(v'(t),v(t)\right)\right]=\left(h'_1(t),h_1(t)\right)\left(h'_2(t),h_2(t)\right),$$
	so products of two involutions in the derivtaive subgroup are as described in Fact \ref{prop:comm}.  
	
	\item Consider the \textbf{hitting-time subgroup} 
	$$\mathcal{H} = \{\left(\frac{t h'(t)}{h(t)},h(t)\right) \in \mc{R} : h(0)=0,\, h'(0)\neq 0\}.$$ 
	The desctription of involutions if the hitting time subgroup is analogous as in the derivative subgroup, that is it an involution is determined by the series $h(t)$. However, here, we can work out that if $x(t)$ is a series satisfying $x\left(h(t)\right)=-x(t)$ (with $h(t)$ involutive), then 
	$$
	\left(\frac{tx'(t)}{x(t)},x(t)\right)^{-1}\left(\frac{th'(t)}{h(t)},h(t)\right)\left(\frac{tx'(t)}{x(t)},x(t)\right)=(1,-t).
	$$ 
	
	\item Consider the \textbf{Lagrange or associated subgroup} 
	$$\mathcal{L} = \{(1,h(t)) \in \mathcal{R} : h(0)=0,\, h'(0)\neq 0\}.$$ 
	The matrix $(1,h(t)) \in \mathcal{L}$ is an involution if and only if $h(t)$ is an involution. Moreover, every nonscalar involution in $\mathcal{L}$ that is similar to $M$ via transition matrix $\left(1,f(t)\right) \in \mathcal{L}$.
	
	\item Consider the  \textbf{Bell subgroup} $\mathcal{B} = \{(h(t),t h(t))\in \mathcal{R} : h(0)\neq 0\}$. An array $(h(t),t h(t)) \in \mathcal{B}$ is an involution if and only if $(1,h(t))$ is an involution. Moreover, every nonscalar involution in $\mathcal{B}$ that is similar to $M$ by transition matrix  $(g(t),t g(t)) \in \mathcal{B}$.
	
	\item Consider the \textbf{reciprocal subgroup} 
	$$\mathcal{R}_r = \left\{\left(\frac{t^r}{h(t)^r},h(t)\right)\in \mathcal{R} : h(0)=0,\, h'(0)\neq 0,\, r\in\mathbb{N}\right\}.$$ 
	For the reciprocal subgroup we have the same result as for the hitting time subgroup, but only for even $r$. If $r$ is odd, then $R$ conjugating an involution with $M$ is not in the reciprocal subgroup. Yet, similarly as for the derivative subgroup, if $h_1\left(h_2(t)\right)=\left[u(t),v(t)\right]$
	$$\left[\left(u'(t),u(t)\right),\left(v'(t),v(t)\right)\right]=\left(\frac{h'_1(t)}{h_1(t)},h_1(t)\right)\left(\frac{h'_2(t)}{h_2(t)},h_2(t)\right),$$
	so our result holds also in the odd case.
	
	\item For a fixed formal power series $f(t) \in F[[t]]$ with $f(0) \neq 0$, \textbf{the eigenvector or stabilizer subgroup} is defined by  
	$$
	\mathrm{Stab}(f) = \left\{ \left( \frac{f(t)}{f(h(t))},\, h(t) \right) \in \mathcal{R} : h(0)=0,\, h'(0)\neq 0 \right\}.
	$$
	An element $\left( \frac{f(t)}{f(h(t))},\, h(t) \right) \in \mathrm{Stab}(f)$ is an involution if and only if $h(t)$ is an involution.
	\par 
	Since this subgroup depends on the series $f(t)$, the structure of its involutions also varies with $f(t)$. We now focus on the case where $f(t)$ is either even or odd. Apart from the identity element $I$, exactly one of the matrices $M$ or $-M$ lies in $\mathrm{Stab}(f)$, depending on whether $f$ is even or odd, respectively. Moreover, depending on the parity of $f$, if a nonscalar involution in $\mathrm{Stab}(f)$ is conjugate to $M$ or $-M$ via some $(g(t), \ell(t)) \in \mathcal{R}$, then it has the transition matrix $\left( \frac{f(t)}{f(\ell(t))},\, \ell(t) \right) \in \mathrm{Stab}(f)$.
	
	\item Consider the \textbf{Appell or Toeplitz subgroup} $\mathcal{A} = \{(h(t),t) \in \mathcal{R}: h(0)\neq 0\}$. An element $(h(t),t) \in \mathcal{A}$ is an involution if and only if $(h(t),t)^2 = (1,t)$, i.e., $h(t)^2 = 1$. Thus, $h(t) = \pm 1$, so $\pm I$ are the only involutions in $\mathcal{A}$. 
	
	\item Consider the  subgroup  $B_{c,n}=  \left\{ \left(\frac{1}{1-ct^n},\frac{t}{\sqrt[n]{1-ct^n}}\right)\right\}$. If $\mathrm{char}(F) = 2$, then every element of $B_{c,n}$ is an involution. Yet, as we consider here the fields of characteristic $0$ only the identity element $I$ is an involution.
\end{enumerate}

\par 
All the results obtained above are summarized in Table~\ref{tab:sb}. 

\begin{table}
	\centering
	\caption{ 
		Characterization of involutions and products of two nonscalar involutions in various subgroups of $\mathcal{R}$}\label{tab:sb}
	\small
	\begin{tabular}{|c|p{1.7cm}|p{3.5cm}|p{4.2cm}|p{2.8cm}|}
		\hline
		\textbf{No.} & \textbf{Subgroup} & \textbf{Elements} & \textbf{Involutions} & \textbf{Product of two nonscalar involutions} \\
		\hline\hline
		1 & $\mc D$ & $(h'(t),h(t))$ & $(h'(t),h(t))$, where $h(t)$ is an involution & Commutator in $\mc D$\\[0.5ex]
		\hline
		2 & $\mc{H}$ & $\left(\frac{th'(t)}{h(t)},h(t)\right)$ & $\left(\frac{th'(t)}{h(t)},h(t)\right)$, where $h(t)$ is an involution & Commutator in $\mc H$ \\[0.5ex]
		\hline
		3 & $\mc{L}$ & $(1,h(t))$ & $(1,h(t))$, where $h(t)$ is an involution & Commutator in $\mc L$ \\[0.5ex]
		\hline
		4 & $\mc{B}$ & $(h(t),th(t))$ & $(h(t),th(t))$, where $th(t)$ is an involution & Commutator in $\mc B$ \\[0.5ex]
		\hline
		5 & $\mc{R}_r $ & $\left(\frac{t^r}{(h(t))^r},h(t)\right)$ & $\left(\frac{t^r}{(h(t))^r},h(t)\right)$, where $h(t)$ is an involution & Commutator in $\mc R_r$ \\[0.5ex]
		\hline
		6& $\mathrm{Stab}(f)$ & $\left(\frac{f(t)}{f(h(t))},h(t)\right)$ & $\left(\frac{f(t)}{f(h(t))},h(t)\right)$, where $h(t)$ is an involution and $f(t)$ is arbitrary & Commutator in $\mathrm{Stab}(f)$ \\[0.5ex]
		\hline
		7 & $\mathcal{A}$ & $(h(t),t)$ & $\{\pm I\}$ & $\emptyset$ \\[0.5ex]
		\hline
		8 & $B_{c,n}$ & $\left(\frac{1}{1-ct^n},\frac{t}{\sqrt[n]{1-ct^n}}\right)$ & $\{I\}$ & $\emptyset$ \\[0.5ex]
		\hline
	\end{tabular}
\end{table}

\subsection{Remarks on reversible elements} 

\par 
Since a reversible element $R$ satisfies $X^{-1}RX=R^{-1}$ for some $X$, the main diagonal entries of $R$ may contain only $1$'s and $-1$'s. Clearly, any involution and a product of two involutions (whose main diagonal is constant) is reversible. Thus, it suffices to discuss whether there exists products of $3$ involutions in $\mc R$ that are reversible. Consider then $R\in\mc R$ with the main diagonal of the form $1$, $-1$, $1$, $\ldots$ (The case with the main diagonal $-1$, $1$, $-1$, $\ldots$ may be treated exactly the same way.)
\par 
From the multiplication formula and reversibility definition we obtain 
\begin{corollary}
	If $R=\left(g(t),f(t)\right)\in\mc R$ is reversible in $\mc R$, then $f(t)$ is reversible in $\mb C_1[[t]]$ -- the group of power series with composition.  
\end{corollary}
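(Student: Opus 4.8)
\emph{Proof proposal.}
The plan is to push the hypothesis through the second-coordinate projection $\pi\colon\mc R\to\mb C_1[[t]]$, $\pi\big((g(t),f(t))\big)=f(t)$, which is well defined since every Riordan array satisfies $f(0)=0$ and $f'(0)\neq 0$. By the FTRA,
$$
(g(t),f(t))(u(t),v(t))=(g(t)u(f(t)),v(f(t))),
$$
so the second coordinate of a Riordan product is $v\circ f$; hence $\pi$ intertwines the Riordan product with composition (it is an anti-homomorphism if one takes the product on $\mb C_1[[t]]$ to be $a\cdot b=a\circ b$, and a homomorphism for the opposite convention — only the order of multiplication is affected). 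Two consequences I would record first: since $\pi(I)=\pi((1,t))=t$ is the identity of $\mb C_1[[t]]$, the map $\pi$ sends inverses to compositional inverses, i.e.\ $\pi(R^{-1})=\bar f$, where $\bar f$ is the series with $f(\bar f(t))=\bar f(f(t))=t$; and $\pi$ carries conjugates to conjugates, so $\pi(X^{-1}RX)$ is a conjugate of $\pi(R)$ in $\mb C_1[[t]]$.

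Now I would simply unwind the definition of reversibility. By hypothesis $X^{-1}RX=R^{-1}$ for some $X=(u(t),v(t))\in\mc R$; writing $v=\pi(X)$ (and $\bar v=\pi(X^{-1})$) and applying $\pi$ to both sides yields
$$
v\circ f\circ\bar v=\bar f ,
$$
so $f$ is conjugate in $\mb C_1[[t]]$ to its own compositional inverse $\bar f$, that is, $f$ is reversible in $\mb C_1[[t]]$. This is exactly the assertion.

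There is no real obstacle here: the corollary is a formal consequence of the multiplication formula, as the sentence preceding its statement already signals. The one point deserving a word of care is the composition-order convention — because the Riordan product substitutes the inner series of the \emph{left} factor into the series of the \emph{right} factor, $\pi$ reverses the order of multiplication — but this is immaterial, since an anti-isomorphism, exactly like an isomorphism, carries any element conjugate to its inverse to an element with the same property. I would state the convention once and let the one-line computation above conclude the proof.
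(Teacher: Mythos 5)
Your argument is correct and is exactly the one the paper intends: the corollary is stated there without a written proof, as an immediate consequence of the multiplication formula $(g,f)(u,v)=(gu(f),v(f))$, and your projection onto the second coordinate (an anti-homomorphism onto $\mb C_1[[t]]$) makes that one-line deduction explicit. The remark about the composition-order convention being immaterial for reversibility is the right point of care, and nothing further is needed.
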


Now, recall the following result from \cite{ofar} that classify reversible elements in $\mb C_1[[t]]$. 

\begin{theorem}\cite[Thm.5]{ofar}
	\label{thm:farr}
	Let $f(t)=-t+\cdots$. Then $f(t)$ is reversible if and only if it is conjugate to 
	\begin{equation}
		\label{eq:rever_conj}
		-\frac t{(1+\lambda t^p)^{\frac 1p}}
	\end{equation}
	for some $p\in\mb N$, $\lambda\in\mb C$.
\end{theorem}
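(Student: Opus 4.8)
The plan is to deduce Theorem~\ref{thm:farr} from the classical structure theory of parabolic (tangent-to-the-identity) elements of $\mathbb{C}_1[[t]]$, exploiting that the square of any series $f(t)=-t+\cdots$ is parabolic. The forward implication is the substantial direction; the converse reduces to an explicit computation with the normal form.

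For the forward implication, assume $h\circ f\circ h^{-1}=f^{-1}$. If $f\circ f=\mathrm{id}$, then $f$ is an involution with linear part $-t$: the series $h_{0}(t):=t-f(t)$ satisfies $h_{0}(f(t))=-h_{0}(t)$ and $h_{0}'(0)=2\neq 0$, so $h_{0}$ conjugates $f$ to $t\mapsto -t$, which is the case $\lambda=0$ (and one finds more generally that the odd-$p$ entries of the list in \eqref{eq:rever_conj} are themselves involutions, all conjugate to $-t$). Otherwise $q:=f\circ f$ is a nontrivial parabolic series, say $q(t)=t+ct^{p+1}+\cdots$ with $c\neq 0$; conjugating by $h$ gives $h\circ q\circ h^{-1}=q^{-1}$, so $q$ is reversible. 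Here I would invoke the formal classification of parabolic germs: $q$ is formally conjugate to the time-one map $N_{p,\rho}$ of the vector field $\tfrac{t^{p+1}}{1+\rho t^{p}}\,\partial_{t}$, with the pair $(p,\rho)$ a complete formal invariant, and a short computation — conjugating $-\tfrac{t^{p+1}}{1+\rho t^{p}}\partial_{t}$ by a scalar $t\mapsto\mu t$ with $\mu^{p}=-1$ — shows that $q^{-1}$ is conjugate to $N_{p,-\rho}$. Hence $q$ being conjugate to $q^{-1}$ forces $\rho=-\rho$, i.e.\ $\rho=0$, so $q$ is conjugate to $\Phi:=N_{p,0}$, the time-one map of $t^{p+1}\partial_{t}$, explicitly $\Phi(t)=t/(1-pt^{p})^{1/p}$.

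Replacing $f$ by $\psi\circ f\circ\psi^{-1}$, where $\psi$ conjugates $q$ to $\Phi$, I may assume $f\circ f=\Phi$. Then $f$ commutes with $\Phi$, and since the infinitesimal generator of a parabolic germ is unique, $f$ commutes with the whole formal flow $\{\Phi^{\circ s}\}_{s\in\mathbb{C}}$ of $t^{p+1}\partial_{t}$. The next step is to identify the centralizer of $\Phi$, equivalently the symmetry group of $t^{p+1}\partial_{t}$: any $g$ whose pushforward fixes $t^{p+1}\partial_{t}$ satisfies $g(t)^{p+1}=t^{p+1}g'(t)$, so $g'(0)^{p}=1$, and once the rotation $t\mapsto g'(0)t$ is stripped off the residual tangent-to-identity symmetry is a flow element, since two commuting formal vector fields in one variable are proportional; thus the centralizer consists of the maps $t\mapsto\omega\,\Phi^{\circ s}(t)$ with $\omega^{p}=1$, $s\in\mathbb{C}$. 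Because $f'(0)=-1$, this forces $-1$ to be a $p$-th root of unity (so $p$ is even) and $f=-\Phi^{\circ s}$ for some $s$; squaring and using that $\Phi^{\circ s}$ is odd when $p$ is even gives $\Phi^{\circ 2s}=\Phi$, hence $s=\tfrac12$. Therefore $f$ is conjugate to $-\Phi^{\circ 1/2}(t)=-t/(1-\tfrac{p}{2}t^{p})^{1/p}$, which is of the form $-t/(1+\lambda t^{p})^{1/p}$ in \eqref{eq:rever_conj}.

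For the converse, reversibility is a conjugacy invariant, so it suffices to check that each $\phi_{\lambda}(t):=-t/(1+\lambda t^{p})^{1/p}$ is reversible. If $p$ is odd or $\lambda=0$ one verifies directly that $\phi_{\lambda}\circ\phi_{\lambda}=\mathrm{id}$, so $\phi_{\lambda}$ is an involution, hence reversible. If $p$ is even, write $\phi_{\lambda}=-g_{\lambda}$ with $g_{\lambda}(t)=t/(1+\lambda t^{p})^{1/p}$; then $g_{\lambda}$ is odd and $g_{\lambda}\circ g_{\mu}=g_{\lambda+\mu}$ (it is a reparametrized flow of $t^{p+1}\partial_{t}$), whence $\phi_{\lambda}^{-1}=\phi_{-\lambda}$, and conjugation by the scalar $t\mapsto\mu t$ with $\mu^{p}=-1$ carries $\phi_{\lambda}$ to $\phi_{-\lambda}=\phi_{\lambda}^{-1}$; so $\phi_{\lambda}$ is reversible. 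I expect the main obstacle to lie not in the group-theoretic bookkeeping — the parity argument forcing $p$ even and $f=-\Phi^{\circ 1/2}$, together with the explicit flow formula $\Phi^{\circ s}(t)=t/(1-pst^{p})^{1/p}$, makes the rest routine — but in setting up, with consistent conventions, the three structural inputs on parabolic formal germs used above: completeness of the invariant $(p,\rho)$, the effect of inversion on $\rho$, and the precise description of the symmetry group of $\Phi$; these are available in \cite{ofar,OS}.
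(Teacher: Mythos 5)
The paper does not prove this statement---it is quoted directly from \cite[Thm.~5]{ofar}---so there is no internal proof to compare your argument against. Your reconstruction is correct and follows the standard route of the cited source: reduce to the parabolic square $q=f\circ f$, use the completeness of the formal invariant $(p,\rho)$ together with your (correct) computation that inversion sends $\rho$ to $-\rho$ to force $\rho=0$, and then locate $f$ inside the centralizer $\{\omega\,\Phi^{\circ s}:\omega^{p}=1,\ s\in\mathbb{C}\}$ of the normal form to get $p$ even and $f=-\Phi^{\circ 1/2}$; the only inputs you leave unproved (completeness of $(p,\rho)$, uniqueness of the infinitesimal generator, commuting parabolic germs having proportional generators) are exactly the standard facts you flag as imported from \cite{ofar, OS}, and the explicit verifications (the flow formula for $t^{p+1}\partial_t$, the involution check for odd $p$, $g_{\lambda}\circ g_{\mu}=g_{\lambda+\mu}$, and the conjugation by $t\mapsto\mu t$ with $\mu^{p}=-1$) all check out.
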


\par 
In particular, if $p$ in the above theorem is odd, then the map given by $(\ref{eq:rever_conj})$ is an involution. 

\begin{corollary}
	The set of all reversible elements in $\mc R$ does not coincide with the set of products of two involutions. 
	\par 
	In particular, it contains the set 
	$$
	\left\{\left(\pm 1,-\frac t{(1+\lambda t^p)^{\frac 1p}}\right)\colon\; p\in 2\mb N,\lambda\in\mb C\;\right\}.
	$$
\end{corollary}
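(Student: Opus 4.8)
The plan is to produce, for each even $p$ and each $\lambda\in\mb C$, a concrete element of $\mc R$ that is reversible but not a product of two involutions; the candidates are $(\pm 1, f)$ with $f(t)=-\frac t{(1+\lambda t^p)^{1/p}}$. Three elementary facts about $f$ will be used repeatedly and I would record them first: $f'(0)=-1$; $f\circ f=\frac t{(1+2\lambda t^p)^{1/p}}$ (using that $p$ is even, so $(-1)^p=1$); and $f^{-1}(t)=-\frac t{(1-\lambda t^p)^{1/p}}$. In particular $f\circ f\ne t$ once $\lambda\ne0$. When $\lambda=0$ the element $(\pm1,-t)=\pm M$ is itself an involution, hence reversible and — under the convention adopted in this paper — not a product of two involutions, so that case needs no argument; assume $\lambda\ne0$ from now on.

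For reversibility: since $p$ is even, $-1$ has a $p$-th root $c=e^{i\pi/p}$ in $\mb C$, and a direct substitution shows that $\psi(t)=ct$ satisfies $\psi\circ f\circ\psi^{-1}=f^{-1}$ (the dilation by $c$ replaces the parameter $\lambda$ by $\lambda/c^p=-\lambda$); this is, of course, just a concrete instance of the reversibility of $f$ in $\mb C_1[[t]]$ furnished by Theorem~\ref{thm:farr}. Lifting $\psi$ to $X:=(1,\psi)\in\mc R$ and applying the multiplication rule twice gives $X^{-1}(\pm1,f)X=(\pm1,\psi\circ f\circ\psi^{-1})=(\pm1,f^{-1})=(\pm1,f)^{-1}$, so $(\pm1,f)$ is reversible in $\mc R$.

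The main point is that $(\pm1,f)$ is not a product of two involutions of $\mc R$. Suppose $(\pm1,f)=I_1I_2$ with $I_1=(g_1,h_1)$, $I_2=(g_2,h_2)$ involutions. By the multiplication rule the second component of $I_1I_2$ is $h_2\circ h_1$, which must therefore equal $f$. If one of the $I_j$ is scalar, say $I_j=\varepsilon I$ with $\varepsilon=\pm1$, then $(\pm1,f)=\varepsilon I_{j'}$ for the other index, so $(\pm1,f)^2=(\varepsilon I_{j'})^2=I_{j'}^2=I$; but $(\pm1,f)^2=(1,f\circ f)\ne I$ since $\lambda\ne0$ — contradiction. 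Hence both $I_1$ and $I_2$ are nonscalar, so each $h_j$ is an involutive series different from $t$; since the only involutive power series tangent to the identity is the identity itself (characteristic zero), this forces $h_j'(0)=-1$. Then $(h_2\circ h_1)'(0)=h_2'(0)\,h_1'(0)=1$, while $f'(0)=-1$, contradicting $h_2\circ h_1=f$. Therefore $(\pm1,f)$ is not a product of two involutions.

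Combining the two parts, for every $p\in2\mb N$ and $\lambda\in\mb C$ the array $(\pm1,f)$ is a reversible element of $\mc R$ that is not a product of two involutions, which is the assertion of the corollary. I expect no serious obstacle here: the algebraic heart of the argument is the cheap observation that a product of two nonscalar Riordan involutions has second component with derivative $+1$ at the origin, whereas $f'(0)=-1$. The points requiring a little care are the bookkeeping around the paper's convention that an involution is not counted as a product of two involutions (this is what disposes of $\lambda=0$, and also of the case where an $I_j$ is scalar), and the fact that reversibility genuinely uses $\mb C$ here: over $\mb R$ the dilation argument fails when $p$ is even, since then $-1$ has no $p$-th root.
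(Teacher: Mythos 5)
Your proof is correct and follows essentially the route the paper intends (the corollary is stated there without an explicit proof): reversibility comes from the normal form of Theorem~\ref{thm:farr} — which you make concrete by exhibiting the conjugator $(1,ct)$ with $c^p=-1$ and checking it lifts to $\mathcal R$ — and the obstruction to being a product of two involutions is the value of $f'(0)$. Where the paper would invoke Fact~\ref{prop:comm} together with Proposition~\ref{prop:comm_desc} (a product of two involutions is $\pm$ a commutator, hence has constant diagonal, i.e.\ $f'(0)=1$), you derive the same constraint directly from $h_1'(0)=h_2'(0)=-1$ for nonscalar Riordan involutions; both arguments yield the same contradiction with $f'(0)=-1$, and your handling of the scalar-factor and $\lambda=0$ edge cases is sound.
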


\subsection{Strongly reversible elements} 

\par 
Let us recall that a Riordan array $R$ is called a pseudo-involution if $RM$ is an involution \cite{ca_nk}. It turns out these elements are connected with the strongly reversible Riordan arrays.  

\begin{lemma}
	A Riordan array $R$ is strongly reversible if and only if it is a conjugate to a pseudo-involution.
\end{lemma}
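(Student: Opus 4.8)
The plan is to leverage two things the paper has already established: (i) every nonscalar involution of $\mc R$ is conjugate in $\mc R$ to $M=(1,-t)$ or to $-M$ \cite{ki}, and (ii) the trivial but decisive observation that a pseudo-involution is, by definition, the product of the fixed involution $M$ with another involution. So the lemma is really a repackaging of the technique in the proof of Fact~\ref{prop:comm}, with $M$ playing the role of the distinguished involution.

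For the ``if'' direction, I would start from $R=T^{-1}PT$ with $T\in\mc R$ and $P$ a pseudo-involution. By definition $J:=PM$ is an involution, and since $M^{-1}=M$ we get $P=JM$, hence
$$
R=T^{-1}JMT=(T^{-1}JT)(T^{-1}MT).
$$
Each factor is a conjugate of an involution and therefore an involution, so $R$ is a product of two involutions, i.e. strongly reversible.

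For the ``only if'' direction, write $R=I_1I_2$ with $I_1,I_2\in\mc R$ involutions, both necessarily nonscalar (otherwise $R^2=I$ and $R$ would itself be an involution, the case excluded by the paper's convention). By \cite{ki} choose $T\in\mc R$ and $\epsilon\in\{1,-1\}$ with $T^{-1}I_1T=\epsilon M$, and set $I_2':=T^{-1}I_2T$, still an involution. Then
$$
T^{-1}RT=(\epsilon M)I_2'=:P,
$$
and $P$ is a pseudo-involution, because $PM=\epsilon MI_2'M$ and
$$
(PM)^2=\epsilon^2\,MI_2'M^2I_2'M=M(I_2')^2M=M^2=I,
$$
so $PM$ is an involution. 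Thus $R$ is conjugate to the pseudo-involution $P$.

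The argument is short, and I do not expect a genuine obstacle. The only point needing care is the bookkeeping of degenerate cases: one should check that ``product of two involutions'' being read in the paper's strict sense (not an involution itself) matches the strict sense of ``pseudo-involution that is not an involution'' under conjugacy (a conjugate of an involution is an involution, so the two exceptional classes correspond), and one should note that the sign $\epsilon$ is harmless since $\epsilon^2=1$ makes the computation $(PM)^2=I$ go through unchanged.
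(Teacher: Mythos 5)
Your proof is correct and follows essentially the same route as the paper: both directions hinge on the equivalence between ``product of two involutions'' and ``conjugate to the inverse by an involution,'' together with Kida's result that every nonscalar Riordan involution is conjugate to $\pm M$, and the sign $\epsilon$ is disposed of by squaring exactly as in the paper. The only cosmetic difference is that you conjugate the left factor $I_1$ to $\pm M$ while the paper conjugates the reversing involution $S$ (in effect the right factor), which changes nothing of substance.
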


\begin{proof}
	\par 
	Assume some conjugate of $R$ is a pseudo-involution. Then for some $U\in\mc R$ the array $R^U$ is a pseudo-involution and we have $(R^UM)^2=I$. Then also 
	$$
	I=\left((R^UM)^2\right)^{U^{-1}}.
	$$
	Expanding
	$$
	I=\left((R^UM)^2\right)^{U^{-1}}=UU^{-1}RUMU^{-1}RUMU^{-1}=(RUMU^{-1})^2=(RM^{U^{-1}})^2.
	$$
	Denote $M^{U^{-1}}$ by $\tilde M$. Note that, as a conjugate of $M$, the matrix $\tilde M$ is an involution, so $\tilde M^{-1}=\tilde M$. Hence, from $(R\tilde M)^2=I$ we get 
	$$
	R^{-1}=\tilde MR\tilde M=\tilde M^{-1}R\tilde M,
	$$
	that is $R$ is strongly reversible. 
	\par 
	Suppose $R$ is strongly reversible. Then there exists an involution $S$ such that $SRS=S^{-1}RS=R^{-1}$. The latter is equivalent to $(RS)^2=I$, i.e. $RS$ in an involution. 
	Since $S$ is an involution as well, there exists $U\in\mc R$ such that $S^U=\pm M$. 
	We have then $I=\left((RS)^2\right)^U=(R^US^U)^2=(R^UM)^2$, i.e. $R^U$ is a pseudo-involution. 
\end{proof}

\subsection{Products of two reversible elements}

\par 
Note that since main diagonal of a triangular matrix does not change after a conjugating by a triangular matrix, main diagonal of any reversible triangular array may consists only of the elements $1$ and $-1$. 
\par 
Clearly all involutions are reversible, as well as the products of two involutions, which we described in the preceding section. Thus, we conclude the following. 

\begin{corollary}
	The set of products of two reversible elements in $\mc R$ contains the subgroup of $\mc R$ which consists of all Riordan arrays whose main diagonals are of the forms $1,1,1,\ldots$, $-1,-1,-1,\ldots$. 
	\par 
	Moreover, if the main diagonal of a Riordan array $R$ in this group is of the form $1,1,1,\ldots$, $-1,-1,-1,\ldots$, then $R$ is reversible. Otherwise, it is a product of two reversible arrays. 
\end{corollary}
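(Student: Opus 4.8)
The assertion combines a containment statement, $G\subseteq\{\text{products of two reversibles}\}$ where $G$ is the subgroup of arrays with main diagonal $1,1,1,\dots$ or $-1,-1,-1,\dots$, with a description of which of these arrays are already reversible. I would open with two routine reductions. That $G$ is a subgroup follows at once from the composition law, since the value at $0$ of the first component of a product is multiplicative and so is the value at $0$ of the derivative of the second component; hence the constraints $g(0)=\pm1$, $f'(0)=1$ are stable under products and inverses. Next, the sign is inessential: $-I=(-1,t)$ is a central involution, and any $R\in G$ with diagonal $-1,-1,\dots$ equals $(-I)(-R)$ with $-R$ of diagonal $1,1,\dots$; because conjugation commutes with multiplication by $-I$, the array $R$ is reversible iff $-R$ is, and $R$ is a product of two reversibles iff $-R$ is. Thus everything reduces to arrays with diagonal $1,1,1,\dots$, i.e.\ — by Proposition~\ref{prop:comm_desc} — to elements of the commutator subgroup.

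The inexpensive inclusions come next. Involutions are reversible, and so is any product of two involutions, since such a product is by definition conjugate to its inverse by an involution; by Fact~\ref{prop:comm} together with Proposition~\ref{prop:comm_desc} every such product already lies in $G$. Moreover any $R$ that happens to be reversible is trivially a product of two reversibles (take $R=(RS)S$ with an involution $S$ chosen so that $RS$ is still reversible, or simply $R=R\cdot I$). This disposes of the reversibility alternative of the ``Moreover'' clause: by the corollary that reversibility of $R=(g,f)$ forces reversibility of $f$, together with the classification of reversible power series (Theorem~\ref{thm:farr} and its tangent-to-the-identity counterpart), the arrays $R\in G$ that are reversible are exactly those whose series part is conjugate to $\tfrac{t}{(1+\lambda t^{p})^{1/p}}$, and for those there is nothing left to prove.

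The substantial step — which I expect to be the main obstacle — is to realise the arrays in $G$ that are \emph{not} reversible as products of two reversibles. My plan is to fix a reversible ``model'' $R_0\in G$, namely a power-series conjugate of $\bigl(1,\tfrac{t}{(1+\lambda t^{p})^{1/p}}\bigr)$ with $p$ matched to the multiplicity of $f(t)-t$, and to search for a factorisation $R=(U^{-1}R_0U)(V^{-1}R_0V)$ with $U,V\in\mc R$ — equivalently, to exhibit $R$ as a conjugate of a product $R_0\cdot(W^{-1}R_0W)$ — then solve for $W$ (and the parameters $p,\lambda$) coefficient by coefficient via the composition law. The key mechanism, which also explains how a product of two reversibles can fail to be reversible, is that composing a reversible parabolic series of multiplicity $p+1$ with a reversible one of strictly larger multiplicity yields a series still of multiplicity $p+1$ whose residue need not vanish; so the residue of $f$, which by Theorem~\ref{thm:farr} is exactly the obstruction to reversibility of $R$, can be manufactured by the product. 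Carrying out this coefficient comparison — and the analogous treatment of arrays whose diagonal alternates, obtained by splitting off the constant-diagonal Appell factor $(g,t)$ from the Lagrange factor $(1,f)$ and reducing to the previous case — is where the real work lies; the remainder is bookkeeping with the composition law.
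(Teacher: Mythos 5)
The central step of your argument is missing. Everything before your last paragraph is the cheap part (that $G$ is a subgroup, that multiplying by the central involution $-I$ normalises the sign, that involutions and products of two involutions are reversible, that a reversible array is trivially a product of two reversibles). The actual content of the corollary is that a \emph{non-reversible} array with constant diagonal is still a product of two reversibles, and for that you offer only a plan: write $R=(U^{-1}R_0U)(V^{-1}R_0V)$ for a reversible model $R_0$ and ``solve coefficient by coefficient.'' You give no reason why this system is solvable. In particular, conjugates of $\bigl(1,\,t/(1+\lambda t^{p})^{1/p}\bigr)$ have first components of the very constrained shape $u(\cdot)/u(\cdot)$, and it is not at all clear that a product of two such arrays can realise an arbitrary first component $g$ with $g(0)=1$ together with an arbitrary prescribed $f$. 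Your reduction of the alternating-diagonal case has the same defect in sharper form: factoring $(g,f)=(g,t)(1,f)$ and treating each factor separately produces a product of \emph{four} reversibles, not two.

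The paper reaches the conclusion by a different and essentially free route. By the theorem of L\'{u}zon--Mor\'{o}n--Prieto-Mart\'{\i}nez quoted in the introduction \cite{lu_mo_pr_2022}, the subgroup $\langle\mathfrak{J}\rangle$ generated by the Riordan involutions consists of the arrays with diagonal entries $\pm 1$, and each of its elements is a product of at most four involutions. Grouping the factors in pairs, a constant-diagonal array is $(I_1I_2)(I_3I_4)$, a product of two strongly reversible (hence reversible) arrays, and an alternating-diagonal array is $(I_1I_2)I_3$, again a product of two reversibles; combined with Fact~\ref{prop:comm} and Proposition~\ref{prop:comm_desc}, that is the whole proof, with no coefficient equations to solve. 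Two smaller points. First, your description of the reversible elements of $G$ as ``exactly those whose series part is conjugate to $t/(1+\lambda t^{p})^{1/p}$'' promotes the paper's necessary condition (reversibility of $(g,f)$ forces reversibility of $f$) to a sufficient one, which is not justified. Second, the ``Moreover'' clause cannot be read literally: a constant diagonal does not force reversibility --- $(1,\,t+t^{2})$ has diagonal $1,1,1,\ldots$ but $t+t^{2}$ is not a reversible power series, so by the paper's own earlier corollary this array is not reversible --- and the intended dichotomy is rather ``a product of two involutions is reversible; otherwise the array is a product of two reversibles.'' Your charitable reinterpretation of that clause is reasonable, but it does not repair the missing main step.
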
 

\medspace
\medspace

\par 
\textbf{Data availability statement}
\par 
All data generated or analysed during this study are included in this published article.


\begin{thebibliography}{66}
	\bibitem{bar} P. Barry, \textit{Riordan arrays: A primer}, Logic Press, Kilcock, Co. Kildare, 2017.
	\bibitem{ca_nk} N.T. Cameron, A. Nkwanta, \textit{On some (pseudo)involutions in the Riordan group}. J. Integer Seq. \textbf{8(3)} (2005), Art. 05.3.7.
	\bibitem{Ch_ki} G.-S. Cheon, H. Kim, \textit{Simple proofs of open problems about the structure of involutions in the Riordan group}. Linear Algebra Appl. \textbf{428(4)} (2008), 930--940.
	\bibitem{Ch_ki_sh} G.-S. Cheon,   H. Kim,  L. Shapiro, \textit{Riordan group involutions}. Linear Algebra Appl. \textbf{428(4)} (2008), 941--952.
	\bibitem{do} D.\v{Z}. Dokovi\'{c}, \textit{Product of two involutions}. Arch. Math. \textbf{18} (1967), 582-584.  
	\bibitem{gu_ha_ra} W. H.  Gustafson,  P. R. Halmos,  H. Radjavi, \textit{Products of involutions}. Linear Algebra Appl. \textbf{13} (1976), 157--162.
	\bibitem{ho_pa} F. Hoffman, E.C. Paige, \textit{Products of two involutions in the general linear group}. Math. J., Indiana Univ. \textbf{20} (1971), 1017-1020.
	\bibitem{ki} M. Kida, \textit{On the involutions of the Riordan group}. Funct. Approximation, Comment. Math. \textbf{54(1)} (2016), 19-23. 
	\bibitem{lu_mo_pr_2017} A. L\'{u}zon, M. Mor\'{o}n, L. F. Prieto-Mart\'{\i}nez, \textit{A formula to construct all involutions in Riordan matrix groups}. Linear Algebra Appl. \textbf{533} (2017), 397-417.
	\bibitem{lu_mo_pr_2022} A. L\'{u}zon, M. Mor\'{o}n, L. F. Prieto-Mart\'{\i}nez, \textit{The group generated by Riordan involutions}. Rev. Mat. Complut. \textbf{35(1)} (2022), 199-217. 
	\bibitem{lu_mo_pr_2023} A. L\'{u}zon, M. Mor\'{o}n, L. F. Prieto-Mart\'{\i}nez, \textit{Commutators and commutator subgroup of the Riordan group}. Adv. Math. \textbf{428} (2023), Article ID 109164, 20 p.
	\bibitem{ofar} A.G. O'Farrell, \textit{Composition of involutive power series, and reversible series}. Comput. Methods Funct. Theory \textbf{8(1)} (2008), 173-193.
	\bibitem{OS} A. G. O'Farrell, I.  Short, {\it Reversibility in Dynamics and Group Theory}. London Mathematical Society Lecture Note Series, 416. Cambridge University Press, Cambridge, 2015.
	\bibitem{sh_ge_wo_wo} L. W. Shapiro, S. Getu, W. Woan, and L. C. Woodson, \textit{The Riordan Group}. Discrete Appl. Math. \textbf{34} (1991), 229-239.
	\bibitem{sh} L. Shapiro, \textit{Some open questions about random walks, involutions, limiting distributions, and generating functions}. Adv. Appl. Math. \textbf{27(2-3)} (2001), 585-596.
	\bibitem{Shapiro_book_22} L. Shapiro,  R. Sprugnoli, P. Barry,   G.-S. Cheon,   T.-X. He,  D. Merlini,  W. Wang,
	\textit{The Riordan group and applications}. Springer Monogr. Math., Springer, Cham, 2022.
	\bibitem{wo} M.J. Wonenburger, \textit{Transformations which are products of two involutions}. J. Math. Mech. \textbf{16} (1966), 327-338. 
\end{thebibliography}
\end{document}